\def\be{\begin{equation}}
\def\ee{\end{equation}}
\newtheorem*{completeness*}{Completeness property}
\newtheorem*{theorem*}{Theorem}
\newtheorem{theorem}{Theorem}
\newtheorem*{proposition*}{Proposition}
\newtheorem{lemma}{Lemma}
\theoremstyle{remark}
\newcommand{\nc}{\newcommand}
\newcommand{\N}{{\mathbb N}}
\nc{\supp}{\operatorname{supp}}
\nc{\Real}{\operatorname{Re}}
\nc{\Imag}{\operatorname{Im}}
\nc{\dif}{\operatorname{d}} \nc{\im}{\operatorname{i}}
\nc{\Hi}{{\mathscr{H}}^\infty} \nc{\Ht}{{\mathscr{H}}^2}
\nc{\Hone}{{\mathscr{H}}^1} \nc{\ol}{\overline} \nc{\bz}{\mathbf{z}}
\nc{\bw}{\mathbf{w}} \nc{\eps}{\varepsilon}
\begin{document}
%\title[GCD sums and extreme values of the Riemann zeta function]{GCD sums and extreme values of the Riemann zeta function}
\title[Extreme values of the Riemann zeta function and its argument]
%An $\Omega_+$ theorem for $S(t)$ and $S_1(t)$ on the Riemann hypothesis]
{Extreme values of the Riemann zeta function and its argument}
%\\ and an $\Omega_+$ theorem for $S(t)$ and $S_1(t)$ on the Riemann hypothesis}
\author{Andriy Bondarenko}
%\address{Department of Mathematical Analysis\\ Taras Shevchenko National University of Kyiv\\
%Volody- myrska 64\\ 01033 Kyiv\\ Ukraine}
\address{Department of Mathematical Sciences \\ Norwegian University of Science and Technology \\ NO-7491 Trondheim \\ Norway}

\email{andriybond@gmail.com}
\author[Kristian Seip]{Kristian Seip}
\address{Department of Mathematical Sciences \\ Norwegian University of Science and Technology \\ NO-7491 Trondheim \\ Norway}
\email{kristian.seip@ntnu.no}
\thanks{Research supported in part by Grant 227768 of the Research Council of Norway. }
\subjclass[2010]{11M06, 11C20}
\maketitle

\begin{abstract}
We combine our version of the resonance method with certain convolution formulas for $\zeta(s)$ and 
$\log\, \zeta(s)$. This leads to a new $\Omega$ result for $|\zeta(1/2+it)|$: The maximum of $|\zeta(1/2+it)|$ on the interval $1 \le t \le T$ is at least 
$\exp\left((1+o(1)) \sqrt{\log T \log\log\log T/\log\log T}\right)$. We also obtain conditional results for $S(t):=1/\pi$ times the argument of $\zeta(1/2+it)$ and $S_1(t):=\int_0^t S(\tau)d\tau$. On the Riemann hypothesis, the maximum of $|S(t)|$  is at least $c  \sqrt{\log T \log\log\log T/\log\log T}$ and   
the maximum of $S_1(t)$ is at least $c_1  \sqrt{\log T \log\log\log T/(\log\log T)^3}$  on 
the interval $T^{\beta} \le t \le T$ whenever $0\le \beta < 1$. 
% obtain a slight improvement of Tsangs's unconditional $\Omega_+$ result for $S_1(t)$.
\end{abstract}

\section{Introduction}

This paper combines certain convolution formulas for $\zeta(s)$ and $\log\, \zeta(s)$ with the resonance method, as developed in our recent paper \cite{BS1}. As a result, we obtain an improved unconditional $\Omega$ result for $|\zeta(1/2+it)|$ and improved conditional $\Omega$ results for the functions $S(t)$ and $S_1(t)$. We begin by stating a strengthened version of the main theorem of \cite{BS1}.

\begin{theorem} \label{extreme}
Let $0\le \beta<1$ be given and let $c$ be a positive number less than 
$\sqrt{1-\beta}$. If $T$ is sufficiently large, then there exists a $t$, $T^{\beta} \le t \le T$, such that
\begin{equation} \label{eq:ass} \left|\zeta\Big(\frac{1}{2}+it\Big)\right| \ge \exp\left(c\sqrt{\frac{\log T \log\log\log T}{\log\log T}}\right). \end{equation}
\end{theorem}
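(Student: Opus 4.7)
The plan is to extend the resonance method of \cite{BS1} by combining it with a convolution-type representation of $\zeta(1/2+it)$. I would fix a nonnegative smooth weight $\Phi$ supported in $[T^{\beta}, T]$ and equal to $1$ on a central subinterval, and introduce a Dirichlet polynomial resonator
$$R(t) = \sum_{n \in \mathcal{M}} r(n)\, n^{-it}$$
with coefficients $r(n) \ge 0$ supported on a finite set $\mathcal{M} \subset [1, N]$. The elementary inequality
$$\max_{t \in \supp \Phi} \bigl|\zeta(1/2+it)\bigr| \;\ge\; \frac{\bigl|\int \zeta(1/2+it)\, |R(t)|^2\, \Phi(t)\,dt\bigr|}{\int |R(t)|^2\, \Phi(t)\,dt}$$
reduces the problem to producing a lower bound for the ratio on the right, with $N$ and $\Phi$ to be chosen at the end.

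To estimate the numerator I would invoke the promised convolution formula to replace $\zeta(1/2+it)$ by a smoothed truncated Dirichlet series of the form
$$\zeta(1/2+it) \;\approx\; \sum_{k \ge 1} \frac{1}{k^{1/2+it}}\, V\!\left(\frac{k}{X}\right),$$
where $V$ is smooth and rapidly decaying and $X$ is a truncation parameter, essentially the smoothed approximate functional equation obtained by a contour shift. Substituting this, expanding $|R|^2 = \sum_{m,n} r(m)r(n) (n/m)^{it}$, and exchanging the order of summation and integration turns the numerator into
$$\sum_{m,n \in \mathcal{M}} \sum_{k \ge 1} \frac{r(m)\,r(n)}{\sqrt{k}}\, V\!\left(\frac{k}{X}\right)\,\widehat{\Phi}\!\left(\frac{\log(mk/n)}{2\pi}\right).$$
Because $\widehat{\Phi}$ decays rapidly away from the origin, the dominant contribution comes from the Rankin-type ``diagonal'' $n = mk$, i.e.\ those pairs where $n$ is a multiple of some $m \in \mathcal{M}$ by a factor $k \le X$.

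For $r$ I would use the multiplicative construction of \cite{BS1}: take $r$ supported on squarefree integers built from primes in a set $\mathcal{P}$ lying in a suitable window (of the shape $[e^{(\log\log T)^{2}},\, e^{\sqrt{\log T\,\log\log T}}]$), with $r(p) = f(p)/\sqrt{p}$ for a weight $f$ to be optimized. Under this choice the ratio collapses to an Euler product over $p \in \mathcal{P}$ whose logarithm is essentially $\sum_{p \in \mathcal{P}} f(p)/\sqrt{p}$, and the Cauchy--Schwarz type optimization of $f$ carried out in \cite{BS1} then produces the target lower bound $\exp\!\bigl(c\sqrt{\log T\,\log\log\log T/\log\log T}\bigr)$.

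The main difficulty will lie in reconciling the restriction to $[T^{\beta},T]$ with the lengths $N$ and $X$. For the off-diagonal terms to be negligible one needs $NX$ comfortably below $T^{1-\beta}$ so that $\widehat{\Phi}$ is evaluated inside its region of rapid decay, and the convolution identity itself requires $X$ to be compatible with the smallest point $T^{\beta}$ of $\supp \Phi$ so that the error term in the approximation stays under control on the whole support. Together these two constraints force the admissible window for $\mathcal{P}$ to shrink by a factor proportional to $1-\beta$, and this shrinkage is precisely the source of the factor $\sqrt{1-\beta}$ in the final constant. Verifying these estimates carefully, while preserving the size of the diagonal main term, is the delicate technical step where the convolution formula genuinely earns its place in the argument.
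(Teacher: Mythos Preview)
Your outline has the right overall shape, but two concrete errors would keep it from producing the claimed bound.

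First, the prime window is wrong, and with it the description of the resonator. The construction of \cite{BS1} (reproduced in Section~3 of the paper) uses primes in
\[
e\log N\log_2 N \;<\; p \;\le\; (\log N)\exp\big((\log_2 N)^{\gamma}\big)\log_2 N,
\]
i.e.\ primes of size $(\log T)^{1+o(1)}$, not in $[e^{(\log_2 T)^2}, e^{\sqrt{\log T\log_2 T}}]$. More importantly, your remark that ``the ratio collapses to an Euler product over $p\in\mathcal{P}$'' describes Soundararajan's short resonator, not the long one of \cite{BS1}. The whole point of \cite{BS1} is to take the resonator length $N$ far beyond $\sqrt{T}$; this destroys any Euler-product collapse, and one must instead pass from the multiplicative set $\mathcal{M}$ to a logarithmically well-separated subset $\mathcal{M}'$ and control the numerator and denominator via inequalities of the type \eqref{eq:base}. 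Without this step your off-diagonal estimate does not hold for $N$ close to $T^{1-\beta}$, and with a short resonator you lose the factor $\sqrt{\log_3 T}$.

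Second, the source of the constant $\sqrt{1-\beta}$ is misidentified, and your localization scheme imposes an unnecessary constraint. The paper does \emph{not} take $\Phi$ compactly supported in $[T^{\beta},T]$; it uses the Gaussian $\Phi(t/T)$ on the whole line, convolves $\zeta$ with a Fej\'er kernel $K$ whose Fourier transform has support of width $O(\varepsilon\log T)$ (so that effectively $X=T^{\varepsilon}$ with $\varepsilon$ arbitrarily small), and only afterwards removes the range $|t|\le T^{\beta}$ via the trivial bound $|R(t)|\le R(0)$ together with the second moment of $\zeta(1/2+it)$. This yields the single constraint $N=[T^{\kappa}]$ with $\kappa<1-\beta$, and the factor $\sqrt{1-\beta}$ then enters through $\sqrt{\log N}\sim\sqrt{\kappa\log T}$ in Lemma~\ref{lem:m2}, not through any shrinking of the prime window. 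Your coupled constraint $NX\ll T^{1-\beta}$, with $X$ forced to be at least a power of $T$ by the approximate functional equation on $[T^{\beta},T]$, would cost you in $N$ and hence in the final constant.
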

This implies in particular that $|\zeta(1/2+it)|=\Omega\left(\exp\left((1+o(1))\sqrt{\frac{\log T \log\log\log T}{\log\log T}}\right)\right)$, where the gain compared to \cite{BS1} is that the factor in front of the square-root has been increased from $1/\sqrt{2}+o(1)$ to $1+o(1)$.  We will see that this improvement has the following simple explanation: We avoid using the classical approximation 
\begin{equation} \label{eq:approx}
 \zeta(1/2+i t)= \sum_{n\le T} n^{-1/2-it} - \frac{T^{1/2-it}}{1/2-it}+O(T^{-1/2}),  \quad |t|\le T, \end{equation}
where the second term causes problems when $\beta<1/2$; we replace this approximation by $\zeta(1/2+it)$ convolved with a suitable smooth kernel $K(t)$, so that the influence of the pole of $\zeta(s)$ becomes essentially harmless in the whole range $0\le \beta <1$.

We next turn to our results for the  functions 
\[ S(t):=\frac{1}{\pi} \Imag \log \zeta(1/2+it) \quad \text{and} \quad S_1(t):=\int_0^t S(\tau) d\tau. \] 
Here we use the standard convention that whenever $t$ is not an ordinate of a zero of $\zeta(s)$, $\log \, \zeta(\sigma+it)$ is obtained by continuous variation along the straight line segments joining $2$, $2+it$, and $\sigma+it$, starting from the real value $\zeta(2)$; if $t$ is an ordinate of a zero, then $S$ has a jump discontinuity at $t$, and we declare that $S(t):=
\lim_{\varepsilon\to 0} \big(S(t+\varepsilon)+S(t-\varepsilon)\big)/2$. 
The function $S(t)$ and its primitive $S_1(t)$ 
are instrumental in the study of the finer structure of the vertical distribution of the nontrivial zeros of 
$\zeta(s)$, as seen from the classical Riemann--von Mangoldt formula:
\[ N(t)=\frac{t}{2\pi}\log \frac{t}{2\pi e}+\frac{7}{8}+S(t)+O\left(\frac{1}{t}\right),\]
where as usual $N(t)$ is the number of zeros $\beta+i\gamma$ of $\zeta(s)$ for which $0<\gamma<t$.

The classical bound $S(t)=O(\log t)$ was proved by Backlund, and the fact that $S_1(t)=O(\log t)$ was established by Littlewood. No improvements of these results are known, but on the Riemann hypothesis (RH),  $S(t)=O(\log t/\log\log t)$ and $S_1(t):=O(\log t/(\log\log t)^2)$; see \cite{CCM} for the most recent refinements of these estimates. In the other direction, it is known that
\[ 
S(t)=\begin{cases} \Omega_{\pm}\left((\log  t/\log\log t)^{1/3}\right) &  \\
\Omega_{\pm} \left((\log t/\log\log t)^{1/2}\right) & \text{on RH}\end{cases} \]
by results of respectively Tsang \cite{Ts} and Montgomery \cite{M}, while
\begin{equation} \label{eq:S1b}
S_1(t)=\begin{cases} \Omega_{+}\left((\log t)^{1/2}/(\log\log t)^{-3/2}\right)  \\
\Omega_{-}\left((\log t)^{1/3}\log\log t^{-4/3}\right) \\
\Omega_{-}\left((\log t)^{1/2}(\log\log t)^{-3/2}\right) \quad \text{on RH}
\end{cases} \end{equation}
by work of Tsang, where the first result is from \cite{Ts3} and the two latter bounds are from \cite{Ts}. It is widely believed that the conditional $\Omega$ bounds are closer to the truth. Indeed, a heuristic argument led Montgomery \cite{M} to suggest that the above conditional $\Omega_{\pm}$ bounds for $S(t)$ are optimal; this possible conjecture is also mentioned by Heath-Brown in \cite[p. 384]{T}. Thirty years later, however, Farmer, Gonek, and Hughes  \cite{FGH}  offered alternate heuristic arguments suggesting that one should expect $S(t)$ to grow like $\sqrt{\log t \log\log t}$. Our second theorem shows that at least the conditional $\Omega$ result of Montgomery can be strengthened; we also obtain a conditional $\Omega_+$ result for $S_1(t)$ that supersedes the unconditional one in \eqref{eq:S1b}.

\begin{theorem}\label{thm:Omega} Assume that the Riemann hypothesis is true, and fix $\beta$, $0\le \beta<1$. Then there exist two positive constants $c$ and $c_1$ such that, whenever $T$ is large enough,
\begin{align} \label{eq:Sbound} \max_{T^{\beta}\le t \le T} |S(t)| & \ge c\sqrt{\frac{\log T\log\log\log T}{\log\log T}}, \\ \label{eq:S1bound}
\max_{T^{\beta}\le t \le T} S_1(t) & \ge c_1\sqrt{\frac{\log T\log\log\log T}{(\log\log T)^3}}.
\end{align}
\end{theorem}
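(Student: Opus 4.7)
The plan is to follow the blueprint of Theorem \ref{extreme} --- resonance plus smooth convolution --- applied now to $\log\zeta$ in place of $\zeta$. The new ingredient, made available by the Riemann hypothesis, would be a convolution identity of the form
\[
\Bigl((\log\zeta)\bigl(\tfrac12+i\,\cdot\bigr)\ast K\Bigr)(t)=\sum_{n\ge 2}\frac{\Lambda(n)\hat K(\log n)}{\sqrt n\,\log n}\,n^{-it}+O(1),
\]
valid uniformly in bounded $t$ whenever $K$ is a smooth even kernel with $\hat K$ supported in a short interval $[-Y,Y]$ (with $Y$ a small multiple of $\log\log T$). I would establish this by shifting the contour of a Perron-type integral and invoking the Guinand--Weil explicit formula, using RH to confine the zero side to $O(1)$. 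Taking imaginary parts yields a sine Dirichlet polynomial approximation to $S\ast K$, and integrating once more produces a cosine Dirichlet polynomial approximation to $S_1\ast K$ carrying an extra factor $1/\log n$ per term.

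For \eqref{eq:S1bound} the argument would then transcribe that of Theorem \ref{extreme} almost verbatim. Let $R(t)=\sum_n r(n)n^{-it}$ be the resonator of \cite{BS1} with $r\ge 0$ supported on squarefree products of primes in a suitable window $\mathcal P$, and let $\Phi\ge 0$ be a smooth cut-off supported in $[T^\beta,T]$. The weighted average $\int(S_1\ast K)|R|^2\Phi\,dt\, /\int|R|^2\Phi\,dt$ bounds $\max S_1$ from below. Using the cosine Dirichlet polynomial for $S_1\ast K$ and isolating the diagonal $k=mp$ in $|R|^2$, this average is a positive multiple of
\[
\sum_{p\in\mathcal P}\frac{\hat K(\log p)}{\sqrt p\,\log p}\cdot\frac{\sum_m r(m)r(mp)}{\sum_m r(m)^2}.
\]
The BS1 choice of $r$ would then give \eqref{eq:S1bound}: the extra $1/\log p$ (versus $1/\sqrt p$ in the $|\zeta|$ case) costs one factor of $\log\log T$ in the bound because $\log p\asymp\log\log T$ throughout $\mathcal P$; inside the square root this costs $(\log\log T)^2$, turning the denominator $\log\log T$ of the $|\zeta|$ bound into $(\log\log T)^3$ in \eqref{eq:S1bound}.

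The bound \eqref{eq:Sbound} is more delicate. A naive repetition of the above for $S\ast K$ fails because the sine Dirichlet polynomial is odd in $t$ while $|R|^2\Phi$ is even, so its main-diagonal contribution vanishes. My approach would be to break this symmetry with a unimodular phase, replacing $r(n)$ by $r(n)\,n^{i\alpha}$ for a suitable real $\alpha$ depending on the support of $\Phi$, or equivalently multiplying $K$ by $e^{i\alpha t}$; this shifts the resonance peak to a point where $\mathrm{Im}\log\zeta$ is large rather than where $\mathrm{Re}\log\zeta$ is large. The resulting bilinear sum has the same $1/\sqrt p$ weight per prime as in the $|\zeta|$ case, and the BS1 Euler-product estimate of $|r|$ is unaffected by a unimodular twist, so the lower bound comes out of order $\sqrt{\log T\log\log\log T/\log\log T}$, as asserted in \eqref{eq:Sbound}.

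The principal technical obstacle is the convolution identity itself: its error must be $O(1)$ uniformly for $t\in[T^\beta,T]$, which demands a delicate choice of $K$ balancing the support of $\hat K$ (large enough to capture the primes driving the resonance) against the zero contribution (small enough to be negligible). A secondary subtlety, specific to \eqref{eq:Sbound}, is the verification that the unimodular phase twist produces a non-trivial lower bound without spurious cancellations; this requires careful bookkeeping of phases in the bilinear resonance sums, but should follow the pattern of \cite{BS1}.
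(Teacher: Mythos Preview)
Your treatment of \eqref{eq:S1bound} is essentially the paper's: even kernel, Selberg/Tsang convolution formula (which on RH has no zero contribution), resonance against the cosine Dirichlet polynomial, and the extra $1/\log p\asymp 1/\log_2 T$ accounting for the additional $(\log_2 T)^2$ in the denominator under the square root. That part is fine.

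For \eqref{eq:Sbound} you correctly identify the parity obstruction with an even $K$, but your proposed fix is muddled and misses the paper's much simpler device. The paper takes $K$ to be an \emph{odd} real kernel, specifically $K(t)=-(\log_2 T)^2\,t\,\Phi((\log_2 T)t)$. Then $\widehat K$ is purely imaginary, $\widehat K(\xi)=i\sqrt{2\pi}(\log_2 T)^{-1}\xi\,\Phi(\xi/\log_2 T)$, and
\[
\Imag\bigl(\widehat K(\log n)\,n^{-it}\bigr)=\bigl(\Imag\widehat K(\log n)\bigr)\cos(t\log n)
\]
is a \emph{cosine} series with nonnegative coefficients $\Imag\widehat K(\log n)\ge 0$. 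Thus $\int S(t+u)K(u)\,du$ is, up to the harmless $O(V(t))$, the real part of a Dirichlet series with nonnegative coefficients, and Lemma~\ref{lem:m2log} applies verbatim. No twist is needed; since $K$ is odd one only controls $\int S(t+u)K(u)\,du$ in absolute value, which is why the paper gets $|S(t)|$ rather than a signed $\Omega_\pm$ result.

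Your two alternatives are not equivalent to each other, and one of them does not work as stated. Multiplying $K$ by $e^{i\alpha t}$ makes $K$ complex-valued, so one can no longer pass from the convolution identity for $\log\zeta$ to one for $S$ by taking imaginary parts; the left side is no longer $\int S(t+u)K(u)\,du$. The twist $r(n)\mapsto r(n)n^{i\alpha}$ is a genuinely different manoeuvre (it shifts the resonator, $|R(t)|^2=|R_0(t-\alpha)|^2$), and with $\alpha\asymp 1/\log_2 T$ chosen so that $\sin(\alpha\log p)\approx 1$ for $p\in P$ it can indeed be pushed through; but the dependence is on the prime window $P$, not on the support of $\Phi$, and the bookkeeping is heavier than the odd-kernel route.

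One more correction: on RH the sum over zeros with $\beta>1/2$ in Selberg's formula is \emph{empty}, so there is no balancing of $\widehat K$'s support against a ``zero contribution''; the only error is $O(V(t))$ from the residue at $s=1$, which is automatically small for $t$ bounded away from $0$ once $K$ has mild decay.
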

This result is of course intimately related with the problem of producing large values of $|\zeta(\sigma+it)|$. We may in fact obtain a slightly weaker result than \eqref{eq:S1bound} as a direct consequence of our earlier work in \cite{BS2}. To this end, we start from the conditional formula (see \cite[(14.12.4)]{T})
%\begin{align*}
\[  \log |\zeta(s)| =\Real i\int_{t/2}^{2t} \frac{S(y)}{s-1/2-iy} dy +O(1). \]
% & = \int_{t/2}^{2t} \frac{S(y)(t-y)}{(\sigma-1/2)^2+(t-y)^2} dy. \end{align*}
We write $s=\sigma+it$ and use integration by parts to obtain
\begin{align*} \Real i\int_{t/2}^{2t} \frac{S(y)}{s-1/2-iy} dy&=\Real \left(i \left[\frac{S_1(y)}{s-1/2-iy}\right]_{t/2}^{2t}
+\int_{t/2}^{2t} \frac{S_1(y)}{(s-1/2-i y)^2}dy\right) \\ 
& = \int_{t/2}^{2t} \frac{S_1(y)((\sigma-1/2)^2-(t-y)^2)}{((\sigma-1/2)^2+(y-t)^2)^2}dy+O(\log t/t^2).\end{align*}
Hence, choosing $\sigma=1/2+1/\log\log t$ and using \cite[Theorem 1]{BS2}, we get conditionally that
\[  \max_{T^{1/2}\le t\le T} |S_1(t)|  \ge c_1 (\log\log T)^{-3/2} \sqrt{\log T \log\log\log T}\]
for all large enough $T$.

The starting point for the proof of Theorem~\ref{thm:Omega} is a convolution formula for $\log\, \zeta(s)$, which was introduced by Selberg \cite{S} and later used also by Tsang \cite{Ts,Ts2} to study $S(t)$ and $S_1(t)$. We will present this formula as well as the corresponding one for $\zeta(s)$ in the next section. On the Riemann hypothesis, the two formulas are very similar, as we will see, and both are in tune with the resonance method, which is a device for picking out large values of Dirichlet series. In Section~\ref{sec:res}, we will present the combinatorial construction from \cite{BS1} underlying our resonators, along with some related estimates to be used in our analysis of the two convolution formulas. The proofs of Theorem~\ref{extreme} and Theorem~\ref{thm:Omega} are then given in respectively Section~\ref{sec:proof1} and Section~\ref{sec:proof2}.

The principal difference between our method and those of for example Selberg and Tsang, is that we use the resonance method rather than high moments to detect large values of Dirichlet series. Also, a principal difference between our version of the resonance method and that used earlier by Soundararajan \cite{So}, is that we use significantly larger primes and a longer Dirichlet polynomial in our resonator. The price we pay compared to any of our predecessors, is that the interval on which increased maxima are known to occur, are considerably larger. In the same vein, we have so far been unable to establish any reasonable estimate for the measure of the set on which corresponding large values are taken, comparable to what was established by Soundararajan in \cite{So}. 

To see the interest of this impasse, we mention without proof that we can adapt Soundararajan's measure result and modify Tsang's proof from \cite{Ts3} to reprove Tsang's unconditional $\Omega_+$ result in \eqref{eq:S1b}. Thus we might hope to establish \eqref{eq:S1bound} unconditionally by proving a stronger measure result than that of \cite{So}, valid for larger values of the Dirichlet series in question.  

We close this introduction by mentioning what is the obstacle for getting improved conditional $\Omega_{\pm}$ results for $S(t)$ and an improved $\Omega_-$ result for $S_1(t)$: Our version of the resonance method only catches large positive values of the real part of a Dirichlet series whose coefficients are all nonnegative. In contrast, when relying on high moments, one is able to catch large values of either sign.
 
\section{Convolution formulas for $\zeta(s)$ and $\log\, \zeta(s)$}
We define the Fourier transform $\widehat{K}$ of $K$ on $\Bbb{R}$ as   
\[ \widehat{K}(\xi):=\int_{-\infty}^{\infty} K(x) e^{-ix\xi} dx. \]
The convolution formula to be used for $\zeta(s)$ should be well known to the experts, but we supply its standard proof for the sake of completeness.
\begin{lemma} \label{lem:zeta}Suppose that $1/2\le \sigma <1$, and let $K(x+iy)$ be an analytic function in the horizontal strip $\sigma-2\le y \le 0$ satisfying the growth estimate
\[ \max_{\sigma-2\le y \le 0} |K(x+iy)|= O\left(\frac{1}{|x|^2}\right) \]
when $|x|\to\infty$. Then for every real $t$ we have
\begin{equation} \label{eq:zeta}
\int_{-\infty}^{\infty} \zeta(\sigma+i (t+u)) K(u) du
=  \sum_{n=1}^\infty \widehat{K}(\log n) n^{-\sigma-it}+2\pi K(-t-i(1-\sigma)).  \end{equation}
\end{lemma}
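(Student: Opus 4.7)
The plan is a straightforward contour deformation. The integrand $f(u) := \zeta(\sigma + i(t + u)) K(u)$ is meromorphic in the horizontal strip $\sigma - 2 \le \Imag(u) \le 0$, with a single simple pole at $u_0 := -t - i(1 - \sigma)$, inherited from the pole of $\zeta(s)$ at $s = 1$; since $\Imag(u_0) = \sigma - 1$ lies strictly between $\sigma - 2$ and $0$, this pole sits in the interior of the strip.

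First I would apply Cauchy's theorem to the rectangle with vertices $\pm R$ and $\pm R + i(\sigma - 2)$. Writing $\zeta(s) = 1/(s-1) + \varphi(s)$ with $\varphi$ entire, and using the chain rule $ds/du = i$, one finds $\operatorname{Res}_{u = u_0} f = K(u_0)/i = -iK(u_0)$, so the pole contributes $2\pi i \cdot (-i) K(u_0) = 2\pi K(-t - i(1-\sigma))$ to the difference between the original and shifted contour integrals. The vertical sides of the rectangle vanish as $R \to \infty$ thanks to the hypothesis $|K(x+iy)| = O(1/x^2)$ uniformly in the strip, combined with the polynomial bound on $\zeta$ in the finite vertical strip $\sigma \le \Real(s) \le 2$.

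On the shifted horizontal line $\Imag(u) = \sigma - 2$, the parameter $s = \sigma + i(t+u)$ has $\Real(s) = 2$, so the Dirichlet series $\zeta(s) = \sum_n n^{-s}$ converges absolutely. I would substitute this series and swap sum and integral (Fubini, justified by absolute convergence and the decay of $K$). The resulting inner integral, as a function of $x = \Real(u)$,
\[ \int_{-\infty}^{\infty} K(x + i(\sigma - 2))\, e^{-ix \log n}\, dx, \]
is handled by a further application of Cauchy's theorem: since $K(y) e^{-iy \log n}$ is analytic and decaying in the strip $\sigma - 2 \le \Imag(y) \le 0$, the contour can be pushed back to the real axis with no residues collected. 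The shift produces a factor $n^{2-\sigma}$ from the exponential, which together with the $n^{-2-it}$ in front of the integral and the $\widehat{K}(\log n)$ coming from the real-line integral combines to $\widehat{K}(\log n) n^{-\sigma - it}$. Summing over $n$ recovers the Dirichlet series on the right-hand side of the claimed formula.

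The only genuine technicalities are the two routine bookkeeping steps — the Fubini interchange and the vanishing of the vertical sides of the rectangle — and I do not anticipate any serious obstacle beyond them. The one point of care is to track the orientation of the rectangle and the factor $1/i$ from the chain rule carefully, so that the pole contribution emerges with the stated sign.
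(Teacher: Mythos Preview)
Your approach is correct and essentially identical to the paper's: both deform the contour from $\Real s=\sigma$ to $\Real s=2$, pick up the residue of $\zeta$ at $s=1$, expand $\zeta$ as a Dirichlet series on the $2$-line, and then shift each term back. The only cosmetic difference is that the paper parametrizes the rectangle by $z=s-it$ (so the sides are vertical at $\Real z=\sigma$ and $\Real z=2$) while you work directly in the variable $u$; the two pictures are related by $u=i\sigma-iz$.
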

\begin{proof}
Let $Y$ be a large positive number and $R(Y)$ the rectangle with corners at the points $\sigma\pm iY$ and $2 \pm i Y$. Then by the residue theorem applied to  
the function $f(z):=\zeta(z+it) K(i\sigma-iz)$ in $R(Y)$, we find that 
\[ \int_{-Y}^{Y} \zeta(\sigma+i (t+u)) K(u) du=\int_{-Y}^Y \zeta(2+i(t+u))K(u-i(2-\sigma))du
-2\pi K(t-i(1-\sigma))+O(Y^{-1/2}) \]
by a trivial growth estimate on $\zeta(s)$ when $\Real s \ge 1/2$. Hence
\[ \int_{-\infty}^{\infty} \zeta(\sigma+i (t+u)) K(u) du=\int_{-\infty}^{\infty} \zeta(2+i(t+u))K(u-i(2-\sigma))du
-2\pi K(t-i(1-\sigma)),\]
where both integrals are absolutely convergent by the assumed decay of $K(u)$. Now using the absolutely convergent Dirichlet series of $\zeta(s)$ on the $2$-line and applying Cauchy's theorem termwise to move the integral back to the $\sigma$-line, we reach the desired conclusion.
\end{proof}
The formula of Selberg for $\log\, \zeta(s)$ to be used below, can be found in the following convenient form in \cite[Lemma 5]{Ts}.
\begin{lemma}\label{lem:selberg} Suppose that $1/2\le \sigma <1$, and let $K(x+iy)$ be an analytic function in the horizontal strip $\sigma-2\le y \le 0$ satisfying the growth estimate
\[ V(x):=\max_{\sigma-2\le y \le 0} |K(x+iy)|= O\left(\frac{1}{|x|\log^2 |x|}\right) \]
when $|x|\to\infty$. Then for every $t\neq 0$, we have
\begin{align} \label{eq:Tsang}
\int_{-\infty}^{\infty} \log\, \zeta(\sigma+i (t+u)) K(u) du
= & \sum_{n=2}^\infty \frac{\Lambda(n)}{\log n} \widehat{K}(\log n) n^{-\sigma-it}  \\
\nonumber    & +
2\pi \sum_{\beta>\sigma} \int_0^{\beta-\sigma} 
K(\gamma-t-i \alpha) d\alpha + O(V(t)).  \end{align}
\end{lemma}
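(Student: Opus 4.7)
The plan is to mirror the proof of Lemma~\ref{lem:zeta}, with the extra care required to accommodate the multivalued nature of $\log\,\zeta$.

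I would begin by applying Cauchy's theorem to $f(z):=\log\,\zeta(z+it)\,K(i\sigma-iz)$ on the rectangle $R(Y)$ with corners $\sigma\pm iY$ and $2\pm iY$, for $Y$ large. Inside $R(Y)$, $\log\,\zeta(z+it)$ is analytic except at the pole-induced logarithmic singularity $z=1-it$ and at the branch points $z=\rho-it$, one for each nontrivial zero $\rho=\beta+i\gamma$ with $\sigma<\beta$ and $|\gamma-t|<Y$. Consistent with the paper's continuation convention (continuation from $2$ upward and then horizontally to the left at constant imaginary part), I would cut horizontally from each singular point leftward, so that the cuts exit the rectangle on the $\sigma$-line. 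On the resulting slit rectangle $f$ is single-valued and analytic, and Cauchy's theorem yields an identity between the outer boundary integrals and the keyhole integrals around the cuts.

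The two horizontal edges of $R(Y)$ vanish as $Y\to\infty$ (taken along a sequence avoiding ordinates of zeros), using the standard estimate $\log\,\zeta(\sigma'+iv)=O(\log|v|)$ uniformly in $\sigma'\in[\sigma,2]$ combined with the hypothesis $V(x)=O(1/(|x|\log^{2}|x|))$. On the right vertical edge, the Dirichlet series $\log\,\zeta(s)=\sum_{n\ge 2}\Lambda(n)/(\log n)\,n^{-s}$ converges absolutely; interchanging summation and integration and then moving each term's contour back to the real line (justified by the decay of $K$, as in the proof of Lemma~\ref{lem:zeta}) produces the Dirichlet polynomial $\sum_{n\ge 2}\Lambda(n)/(\log n)\,\widehat{K}(\log n)\,n^{-\sigma-it}$. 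The left vertical edge is exactly the integral we wish to evaluate.

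The branch-cut contributions supply the remaining two terms. For a zero $\rho=\beta+i\gamma$ with $\beta>\sigma$, the keyhole around the corresponding cut picks up a $2\pi i$ jump of $\log\,\zeta$; parametrising the cut by $z=x+i(\gamma-t)$ with $x\in[\sigma,\beta]$ and substituting $\alpha=x-\sigma$ converts this contribution into the stated integral $2\pi\int_{0}^{\beta-\sigma}K(\gamma-t-i\alpha)\,d\alpha$, and summing over zeros yields the sum on the right of \eqref{eq:Tsang}. The branch cut attached to the pole of $\zeta$ at $z=1-it$ similarly contributes a $K$-integral along a horizontal segment at height $-t$ of length $1-\sigma$; by the definition of $V$, this contribution is $O(V(t))$, producing the error term.

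The main obstacle I expect is the careful bookkeeping of orientations and signs for the multivalued $\log\,\zeta$: one must verify that the chosen cut directions are compatible with the continuation convention used throughout the paper, and that the $\pm 2\pi i$ jumps combine with the keyhole orientations to give precisely the positive prefactor $2\pi$ in \eqref{eq:Tsang}. The purely analytic inputs — the Dirichlet expansion, the termwise contour shift, the critical-strip bound for $\log\,\zeta$, and the decay of $K$ — are all routine and structurally parallel to the manipulations already carried out in the proof of Lemma~\ref{lem:zeta}.
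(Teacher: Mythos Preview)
The paper does not actually prove this lemma: it merely records the formula and refers the reader to \cite[Lemma~5]{Ts} for a proof. So there is no ``paper's own proof'' to compare with; your task here was essentially to reconstruct the standard argument.

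Your sketch is the correct standard approach --- exactly the contour-shift argument of Selberg and Tsang, parallel to the proof of Lemma~\ref{lem:zeta} but with keyhole contours around the horizontal branch cuts emanating from the logarithmic singularities of $\log\,\zeta(z+it)$ (the pole at $z=1-it$ and the zeros $z=\rho-it$ with $\Real\rho>\sigma$). The identification of the three pieces (Dirichlet series from the $2$-line, zero sum from the cuts at the zeros, $O(V(t))$ from the cut at the pole) is right, and your caveat about sign bookkeeping is appropriate. One small point you gloss over: to justify letting $Y\to\infty$ one should also check that the accumulated contribution from zeros with $|\gamma-t|\le Y$ stays under control; this uses that each term is $\ll(\beta-\sigma)V(\gamma-t)$ together with a zero-counting estimate, and is handled in Tsang's paper. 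But as a proof plan your proposal is sound and matches the literature the paper is citing.
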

Here $\Lambda(n)$ is the classical von Mangoldt function, 
and the second sum is over the zeros $\beta+i\gamma$ of $\zeta(s)$ (if any) satisfying $\beta>\sigma$.  
Thus on the Riemann hypothesis, \eqref{eq:Tsang} reduces to
\begin{equation} \label{eq:Tsang2}
\int_{-\infty}^{\infty} \log\, \zeta(\sigma+i (t+u)) K(u) du
=  \sum_{n=2}^\infty \frac{\Lambda(n)}{\log n} \widehat{K}(\log n) n^{-\sigma-it}   + O(V(t))
\end{equation}
and hence
\begin{equation} \label{eq:Tsang3}
\int_{-\infty}^{\infty} S(t+u) K(u) du
= \frac{1}{\pi} \Imag \sum_{n=2}^\infty \frac{\Lambda(n)}{\log n} \widehat{K}(\log n) n^{-1/2-it}   + O(V(t))
\end{equation}
whenever $K(u)$ is real valued for real arguments $u$.  
Moreover,  using the classical fact that \cite[Theorem 9.9]{T}
\[ h(t):=S_1(t)-\frac{1}{\pi}\int_{1/2}^2 \log|\zeta(\sigma+it)| d\sigma \]
is a bounded function, we infer also from \eqref{eq:Tsang2} Tsang's conditional formula \cite{Ts2}
\begin{equation} \int_{-\infty}^{\infty} \label{eq:Tsang4} (S_1(u+t)-h(u+t)) K(u) du
= \frac{1}{\pi}\Real  \sum_{n=2}^\infty \frac{\Lambda(n)}{\log^2 n} \widehat{K}(\log n)\left( n^{-1/2-it}+O\big(n^{-2}\big)\right)   + O(V(t)), \end{equation}
again assuming that $K(u)$ takes real values for real arguments $u$. 
Notice that here we have extended the definitions of $S(t)$ and $S_1(t)$ in the obvious way so that $S(t)$ is an odd function and $S_1(t)$ is an even function on $\mathbb{R}$. The two conditional formulas \eqref{eq:Tsang3} and \eqref{eq:Tsang4} will be our starting point for the proof of Theorem~\ref{thm:Omega}.

\section{The resonator and associated estimates}\label{sec:res}
A resonator is a function of the form $|R(t)|^2$, where 
\begin{equation} \label{eq:reson} R(t)=\sum_{m\in \mathcal{M}'} r(m) m^{-it}, \end{equation}
and $\mathcal{M}'$ is a suitable finite set of integers. The idea, following \cite{So}, is that $|R(t)|^2$ should ``resonate'' with and pick out large values of the Dirichlet series in question, which will come from the right-hand side of either \eqref{eq:Tsang3} or \eqref{eq:Tsang4}. Before explaining further what this means, we recall the construction of $R(t)$ from \cite{BS1}. 

We begin by fixing a large integer $N$. To simplify the writing, we will  use the short-hand notation $\log_2 x:=\log\log x$ and $\log_3 x:=\log\log\log x$. %, and $\log_4 N:=\log\log\log \log N$.
Let $\gamma$, $0<\gamma<1$, be a parameter to be chosen later, and let $P$ be the set of all primes $p$ such that 
\[ e\log N\log_2 N< p \le \log N\exp( (\log_2 N)^{\gamma})\log_2 N. \]
We define $f(n)$ to be the multiplicative function supported on the set of square-free numbers such that
\[
f(p):=\sqrt{\frac{\log N \log_2N}{\log_3 N}}\frac{1}{\sqrt{p}(\log p-\log_2N-\log_3N)}
\]
for $p$ in $P$ and $f(p)=0$ otherwise. 
%We find that
%\begin{align*}
%\sum_{k, \ell=1}^Nf(n_k)f(n_{\ell})\frac{\gcd(n_k,n_{\ell})}{\sqrt{n_k
%n_{\ell}}}& \ge\sum_{k=1}^N\frac{f(n_k)}{\sqrt{n_k}}\sum_{n_{\ell}|n_k}f(n_{\ell})\frac{\gcd(n_k,n_{\ell})}
%{\sqrt{n_{\ell}}} \\
%& =\sum_{k=1}^N\frac{f(n_k)}{\sqrt{n_k}}\sum_{n_{\ell}|n_k}f(n_{\ell})\sqrt{n_{\ell}}. \end{align*}

Let $P_k$ be the set of all primes $p$ such that $e^k\log N\log_2N<p\le e^{k+1}\log N\log_2N$ for $k=1,\ldots,[(\log_2 N)^{\gamma}]$.
Fix $1 < a < 1/\gamma$. Then let $M_k$ be the set of those integers having at least $\frac{a\log N}{k^2\log_3N}$ prime divisors in $P_k$,
and let $M'_k$ be the set of integers from $M_k$ that have prime divisors only in $P_k$.
Finally, set 
\[ \mathcal{M}:=\supp(f)\setminus\bigcup_{k=1}^{[(\log_2N)^{\gamma}]}M_k.\] In other words,
$\mathcal{M}$ is the set of square-free numbers $n$ that have at most $\frac{a\log N}{k^2\log_3N}$
divisors in each group $P_k$. It is is clear that $\mathcal{M}$ is divisor closed, by which we mean that
$d$ is in $\mathcal{M}$ whenever $m$ is in $\mathcal{M}$ and $d$ divides $m$.

The first of the following two lemmas was established as part of the proof of \cite[Lemma~2]{BS1}.
\begin{lemma}\label{lem:card}
We have $|\mathcal{M}|\le N$ whenever $N$ is large enough, depending on $a$ and $\gamma$.
\end{lemma}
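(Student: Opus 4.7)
The plan is to bound $|\mathcal{M}|$ by a direct combinatorial count. Every $n\in \mathcal{M}$ is square-free with all prime factors in $P=\bigcup_{k=1}^{K} P_k$, where $K:=[(\log_2 N)^{\gamma}]$, and by construction it contains fewer than $j_k := a\log N/(k^2 \log_3 N)$ prime factors from $P_k$. Hence $n$ is determined by choosing, independently for each $k$, a subset of $P_k$ of cardinality less than $j_k$, giving
\[ |\mathcal{M}| \le \prod_{k=1}^{K} \sum_{j < j_k} \binom{|P_k|}{j} \le \prod_{k=1}^{K} \left(\frac{e|P_k|}{j_k}\right)^{j_k}, \]
using the standard estimate for binomial tail sums in the last step (valid once $j_k \le |P_k|/2$, which follows a posteriori from the size of $|P_k|$).

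Next I would estimate $|P_k|$ via the prime number theorem. Since $k \le K = o(\log_2 N)$ (here the hypothesis $\gamma<1$ is crucial), one has $\log p = (1+o(1))\log_2 N$ uniformly for $p \in P_k$, and therefore
\[ |P_k| = (1+o(1))\,(e-1)\,e^{k}\log N. \]
Taking logarithms and substituting the value of $j_k$ yields
\[ j_k \log\!\left(\frac{e|P_k|}{j_k}\right) = j_k\bigl(k + 2\log k + \log\log_3 N + O(1)\bigr). \]

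The dominant contribution comes from $j_k \cdot k = a\log N/(k\log_3 N)$; summing over $k$ and using $\sum_{k=1}^K 1/k \sim \log K = \gamma \log_3 N$ gives $(1+o(1))\,a\gamma \log N$. The subordinate terms involving $\log k$ and $\log\log_3 N$ sum to $O(\log N / \log_3 N)$ and $O(\log N \log\log_3 N / \log_3 N)$ respectively, both $o(\log N)$. Combining these estimates,
\[ \log|\mathcal{M}| \le (a\gamma + o(1))\log N, \]
and since $a\gamma < 1$ is built into the choice of parameters, we conclude $|\mathcal{M}| \le N$ for all sufficiently large $N$. I expect the only nontrivial technical step to be securing the uniformity of the prime number theorem estimate for $|P_k|$ across the range $1\le k\le K$; the assumption $\gamma < 1$ is exactly what makes this work, and everything else is routine bookkeeping.
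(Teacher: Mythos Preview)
Your argument is correct and is precisely the route taken in \cite{BS1}, to which the present paper defers for this lemma: one bounds $|\mathcal{M}|$ by the product over $k$ of truncated binomial sums, applies $\sum_{j\le m}\binom{n}{j}\le (en/m)^m$, estimates $|P_k|\sim (e-1)e^{k}\log N$ via the prime number theorem (uniformly, since $k\le (\log_2 N)^{\gamma}=o(\log_2 N)$), and sums the logarithms to get $\log|\mathcal{M}|\le (a\gamma+o(1))\log N$, which is $<\log N$ because $a<1/\gamma$. One small remark: the inequality $\sum_{j\le m}\binom{n}{j}\le (en/m)^m$ is valid for all $0<m\le n$, so your parenthetical restriction $j_k\le |P_k|/2$ is not needed.
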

\begin{lemma}
\label{lem:new3}
We have
\be
\label{ll2}
\frac{1}{\sum_{i\in\N}f(i)^2}\sum_{n\in\mathcal{M}}f(n)^2\sum_{p|n}\frac{1}{f(p)\sqrt{p}}  \ge(\gamma+o(1))\sqrt{\frac{\log N \log_3 N}{\log_2 N}}.
\ee
\end{lemma}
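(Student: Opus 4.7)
The plan is to first compute the analogous ratio with $\mathcal{M}$ replaced by the full support of $f$, and then show that the excluded set $\bigcup_k M_k$ contributes negligibly. Since $f$ is multiplicative and supported on squarefree integers with prime divisors in $P$, I have $\sum_{i\in\N}f(i)^2=\prod_{p\in P}(1+f(p)^2)$. Interchanging the order of summation and factoring out the $p$-dependence (writing $n=pm$ with $m$ squarefree and coprime to $p$) yields the clean identity
\[
\frac{1}{\sum_{i\in\N}f(i)^2}\sum_{n\in\supp(f)}f(n)^2\sum_{p|n}\frac{1}{f(p)\sqrt{p}}=\sum_{p\in P}\frac{f(p)}{\sqrt{p}(1+f(p)^2)}.
\]

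Next, I would convert this full-support sum into the claimed asymptotic. Since $f(p)^2\ll 1/\log_3 N$ uniformly on $P$, the factor $1/(1+f(p)^2)$ contributes only $1+o(1)$. Substituting $u=\log p-\log_2 N-\log_3 N$ (so that $P_k$ corresponds to $u\in[k,k+1]$) and applying the prime number theorem to replace the sum by an integral, I obtain
\[
\sum_{p\in P}\frac{f(p)}{\sqrt{p}}\sim\sqrt{\frac{\log N \log_2 N}{\log_3 N}}\cdot\frac{1}{\log_2 N}\int_{1}^{(\log_2 N)^\gamma}\frac{du}{u}=(\gamma+o(1))\sqrt{\frac{\log N\log_3 N}{\log_2 N}},
\]
which is exactly the target asymptotic.

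It remains to bound the error from restricting to $\mathcal{M}$, namely
\[
E:=\frac{1}{\prod_{p\in P}(1+f(p)^2)}\sum_{k=1}^{[(\log_2 N)^\gamma]}\sum_{n\in M_k}f(n)^2\sum_{p|n}\frac{1}{f(p)\sqrt{p}}.
\]
A crude estimate gives $\sum_{p|n}(f(p)\sqrt{p})^{-1}\ll \omega(n)(\log_2 N)^\gamma\sqrt{\log_3 N/(\log N\log_2 N)}$, reducing matters to controlling $\sum_{n\in M_k}f(n)^2\,\omega(n)$. Each $M_k$ is a large-deviation event under the probability measure $f(n)^2/\sum_i f(i)^2$: the random variable $\omega_k(n)$ counting prime divisors in $P_k$ has mean $\asymp \log N/(k^2\log_3 N)$, whereas $M_k$ forces $\omega_k(n)\ge a\log N/(k^2\log_3 N)$ with $a>1$. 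The Chernoff-type estimate already underlying the proof of Lemma~\ref{lem:card} in \cite{BS1} then gives $\sum_{n\in M_k}f(n)^2/\sum_i f(i)^2\ll \exp\bigl(-c(a)\log N/(k^2\log_3 N)\bigr)$ for some $c(a)>0$, and summation over $k$ yields $E=o\bigl(\sqrt{\log N\log_3 N/\log_2 N}\bigr)$.

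The main obstacle I expect is in this last step: one must verify that the factor $\omega(n)$ does not destroy the Chernoff bound. The cleanest remedy is to retain $\omega(n)$ inside the moment-generating-function argument used for Lemma~\ref{lem:card}, or equivalently to split $\omega(n)=\sum_k\omega_k(n)$ and treat each piece separately. Either way, the exponential smallness in $\log N/(k^2\log_3 N)$ dominates any fixed power of $\log N$, which is exactly why the admissible range $1<a<1/\gamma$ of parameters in the construction is sufficient.
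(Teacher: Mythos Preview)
Your argument is correct and takes a genuinely different route from the paper. You compute the weighted average directly: by multiplicativity the full-support ratio equals $\sum_{p\in P} f(p)/(\sqrt{p}(1+f(p)^2))$, which you correctly evaluate as $(\gamma+o(1))\sqrt{\log N\log_3 N/\log_2 N}$, and then you argue that the contribution of $\bigcup_k M_k$ is negligible via the same Chernoff bound underlying Lemma~\ref{lem:card}. The paper instead argues \emph{pointwise}: it introduces an auxiliary set $\mathcal{L}\subset\mathcal{M}$ consisting of those $n$ with at least $\alpha\log N/(k^2\log_3 N)$ prime divisors in every $P_k$, shows that $\mathcal{L}$ carries $1-o(1)$ of the mass (this requires an additional lower-tail Chernoff bound, for the sets $L_k$ of integers with too few primes in $P_k$), and then bounds $\sum_{p\mid n}1/(f(p)\sqrt{p})$ from below for each $n\in\mathcal{L}$.

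Your approach is more direct and avoids introducing $\mathcal{L}$ and the lower-tail sets $L_k$ altogether. The price is the one you flag yourself: you must carry the extra factor $\omega(n)$ through the upper-tail large-deviation estimate for $M_k$. Your suggested fix is sound: since under the probability measure $f(n)^2/\sum_i f(i)^2$ the block counts $\omega_j(n)$ are independent, one may write $\mathbb{E}[\mathbf{1}_{M_k}\omega]=\mathbb{P}(M_k)\,\mathbb{E}[\omega-\omega_k]+\mathbb{E}[\mathbf{1}_{M_k}\omega_k]$; the first term is handled by $\mathbb{E}[\omega]\asymp\log N/\log_3 N$, and the second by inserting $\omega_k$ into the moment-generating-function bound (or, more crudely, by $\omega_k\le |P_k|\ll e^k\log N$, which is still dominated by the exponential saving $\exp(-c\log N/(k^2\log_3 N))$ since $k\le (\log_2 N)^\gamma$). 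The paper's pointwise route sidesteps this entirely, which is its main advantage.
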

\begin{proof}
The proof is similar to that of~\cite[Lemma 2]{BS1}. Fix $\alpha$ such that $0<\alpha<1$.
Let $L_k$ be the set of integers in $\supp(f)$ that have at most $\frac{\alpha\log N}{k^2\log_3N}$ prime divisors in $P_k$,
and let $L'_k$ be the set of integers from $L_k$ that have prime divisors only in $P_k$.
Finally, set
\[ \mathcal{L}:=\mathcal{M}\setminus\bigcup_{k=1}^{(\log_2N)^{\gamma}}L_k.\] In other words,
$\mathcal{L}$ is the set of numbers in $\mathcal{M}$ that have at least $\frac{\alpha\log N}{k^2\log_3N}$
divisors in each group $P_k$. To prove the lemma, it is enough to show that
\be
\label{ll3}
\frac{1}{\sum_{i\in\N}f(i)^2}\sum_{n\not\in\mathcal{L}}f(n)^2=o(1), \qquad N\to\infty.
\ee
Indeed, \eqref{ll3} implies that the left-hand side of~\eqref{ll2} is at least
\begin{align*}
(1-o(1)) & \min_{n\in\mathcal{L}}\sum_{p|n}\frac{1}{f(p)\sqrt{p}}
 \ge(1-o(1))\sum_{k=1}^{(\log_2N)^{\gamma}}\frac{\alpha\log N}{k^2\log_3N}\min_{p\in P_k}\frac{1}{f(p)\sqrt{p} } \\
& \ge(1-o(1))\sum_{k=1}^{(\log_2N)^{\gamma}}\frac{\alpha\log N}{k^2\log_3N}k\sqrt{\frac{\log_3N }{\log N\log_2 N}}  \ge (1-o(1))\alpha\gamma\sqrt{\frac{\log N \log_3 N}{\log_2 N}},
\end{align*}
which implies the statement of the lemma, if we choose $\alpha$ arbitrarily close to 1.

We turn to the proof of~\eqref{ll3}. Since
\[ \mathcal{L}=\supp(f)\setminus\bigcup_{k=1}^{(\log_2N)^{\gamma}}\left(M_k\cup L_k\right),\]
it is enough to prove that
\be
\label{ll4}
\frac{1}{\sum_{i\in\N}f(i)^2}\sum_{k=1}^{(\log_2N)^{\gamma}}\sum_{n\in L_k}f(n)^2=o(1),
\ee
and
\be
\label{ll5}
\frac{1}{\sum_{i\in\N}f(i)^2}\sum_{k=1}^{(\log_2N)^{\gamma}}\sum_{n\in M_k}f(n)^2=o(1).
\ee
We will only prove~\eqref{ll4}; the proof of~\eqref{ll5} is similar and was essentially done in~\cite[Lemma 2]{BS1}.

For every fixed $k$, we have
\[
\frac{1}{\sum_{i\in\N}f(i)^2}\sum_{n\in L_k}f(n)^2=\frac{1}{\prod_{p\in P_k}{(1+f(p)^2)}}\sum_{n\in L'_k}f(n)^2.
\]
Using that $f(n)$ is multiplicative and the definition of $L'_k$, we find that
\[
\sum_{n\in L'_k}f(n)^2\le b^{-\alpha\frac{\log N}{k^2\log_3N}}\prod_{p\in P_k}(1+bf(p)^2)
\]
for a suitable $b<1$, and hence
\begin{equation}
\label{ll6}
\frac{1}{\prod_{p\in P_k}(1+f(p)^2)}\sum_{n\in L'_k}f(n)^2\le b^{-\alpha\frac{\log N}{k^2\log_3N}}\exp\left(\sum_{p\in P_k}(b-1) f(p)^2\right).
\end{equation}
We now recall from~\cite[Lemma 2]{BS1} that
\[
\sum_{p\in P_k} f(p)^2\le (1+o(1))\frac{\log N}{k^2\log_3N}.
\]
Therefore the right-hand side  of~\eqref{ll6} is at most
\[
\exp\left((b-1-\alpha \log b +o(1))\frac{\log N}{k^2\log_3N}\right).
\]
Choosing $b$ sufficiently close to 1, we obtain $b-1-\alpha\log b <0$. This gives~\eqref{ll4} and hence~\eqref{ll3}.
\end{proof}

We proceed as in \cite{BS1} (following an idea from \cite{A}) and let $\mathcal{J}$ be the set of integers $j$ such that
\[ \Big[(1+T^{-1})^j,(1+T^{-1})^{j+1}\Big)\bigcap \mathcal{M} \neq \emptyset,  \]
and we let $m_j$ be the minimum of  $\big[(1+T^{-1})^j,(1+T^{-1})^{j+1}\big)\bigcap \mathcal{M}$ for $j$ in $\mathcal{J}$. We then set
\[ \mathcal{M}':= \big \{ m_j: \ j\in \mathcal{J} \big\}\]
and 
\[ r(m_j):= \left(\sum_{n\in \mathcal{M}, (1-T^{-1})^{j-1} \le n \le (1+T^{-1})^{j+2}} f(n)^2\right)^{1/2} \] 
for every $m_j$ in $\mathcal{M}'$. This defines the resonator \eqref{eq:reson}; note that plainly $|\mathcal{M}'|\le |\mathcal{M}| \le N$. 

We will in what follows, for a reason that will become clear later, require that $N= [T^{\kappa}]$ for some $\kappa$, $0<\kappa\le 1$. Also, as in \cite{BS1}, we set $\Phi(t):=e^{-t^2/2}$. We now turn to an estimation of  three integrals involving $|R(t)|^2\Phi(t/T)$ that will be essential in the proofs of our two theorems.  
\begin{lemma} \label{lem:m1} We have
\begin{equation} \label{m1f} \int_{-\infty}^\infty |R(t)|^2 \Phi\Big(\frac{t}{T} \Big) dt  \ll T  \sum_{n\in \mathcal{M}} f(n)^2. \end{equation} 
\end{lemma}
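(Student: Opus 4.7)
The plan is to expand $|R(t)|^2$ as a double sum over the frequencies $m_j \in \mathcal{M}'$, reduce the integral against $\Phi(t/T)$ to a sum of Gaussian Fourier transforms, and then bound the result by separating diagonal and off-diagonal contributions. Explicitly, writing
\[ |R(t)|^2 = \sum_{j,k\in \mathcal{J}} r(m_j) r(m_k) (m_k/m_j)^{it}, \]
and using the standard Gaussian identity $\int_{-\infty}^{\infty} e^{i\xi t}\Phi(t/T)dt=T\sqrt{2\pi}\,e^{-T^2\xi^2/2}$, I would obtain
\[ \int_{-\infty}^\infty |R(t)|^2 \Phi(t/T) dt = T\sqrt{2\pi}\sum_{j,k\in\mathcal{J}} r(m_j) r(m_k)\exp\!\left(-\tfrac{T^2}{2}\log^2(m_k/m_j)\right). \]

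For the diagonal $j=k$, I would use the definition
\[ r(m_j)^2 = \sum_{\substack{n\in\mathcal{M} \\ (1+T^{-1})^{j-1}\le n\le (1+T^{-1})^{j+2}}} f(n)^2 \]
and observe that, since each integer $n\in\mathcal{M}$ lies in a unique dyadic-type interval $[(1+T^{-1})^{j_0},(1+T^{-1})^{j_0+1})$, it is counted in at most four of the $r(m_j)^2$. Hence $\sum_{j\in\mathcal{J}} r(m_j)^2 \le 4\sum_{n\in\mathcal{M}} f(n)^2$, which yields the diagonal bound of the correct size $O(T\sum_{n\in\mathcal{M}} f(n)^2)$.

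For the off-diagonal, the key input is the spacing built into $\mathcal{M}'$: for $j,k\in\mathcal{J}$ with $k\ge j+2$ one has $m_k/m_j\ge (1+T^{-1})^{k-j-1}$, so
\[ T\,\log(m_k/m_j) \ge (k-j-1)\cdot T\log(1+T^{-1})\ge \tfrac{1}{2}(k-j-1) \]
for $T$ large, giving Gaussian decay $e^{-(k-j-1)^2/8}$. I would then apply AM--GM in the form $r(m_j)r(m_k)\le \tfrac{1}{2}(r(m_j)^2+r(m_k)^2)$ and sum geometrically in the gap $|k-j|$, absorbing the adjacent cases $|k-j|=1$ (where the Gaussian factor is bounded by $1$) into the constant. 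This reduces the off-diagonal sum to $O\bigl(\sum_j r(m_j)^2\bigr)$, and combining with the diagonal estimate delivers \eqref{m1f}.

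The only mildly delicate step is the $|k-j|=1$ contribution, where the spacing estimate is trivial; however, since the number of such neighbouring pairs per index is bounded and the AM--GM trick converts the cross terms back into diagonal sums, this causes no loss beyond a harmless constant. Everything else is routine Fourier analysis plus the bookkeeping afforded by the well-spacedness of $\mathcal{M}'$.
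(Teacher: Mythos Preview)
Your argument is correct and is essentially identical to the paper's own proof: expand $|R(t)|^2$, compute the Gaussian Fourier transform to get $\sqrt{2\pi}\,T\sum_{m,n}r(m)r(n)\Phi(T\log(m/n))$, bound the diagonal by $\sum_j r(m_j)^2\ll\sum_{n\in\mathcal M}f(n)^2$ via the overlap count, and control the off-diagonal using the spacing $m_k/m_j\ge(1+T^{-1})^{k-j-1}$ together with AM--GM and the rapid decay of $\Phi$. The only cosmetic differences are your explicit treatment of the $|k-j|=1$ case and the constant $4$ versus the paper's $3$ in the diagonal overlap bound, neither of which is material.
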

\begin{proof} We begin by noting that
\begin{equation}\label{m1}  \int_{-\infty}^\infty |R(t)|^2 \Phi\Big(\frac{t}{T} \Big) dt=\sqrt{2\pi} T  \sum_{m,n\in \mathcal{M}'} r(m)r(n) 
\Phi\Big(T \log \frac{m}{n} \Big) \end{equation}
since
\[ \widehat{\Phi}(x)=\int_{-\infty}^\infty \Phi(t)e^{-itx} dt= \sqrt{2\pi} \Phi(x).  \]
Using the definition of $\mathcal{M}'$, we find that
\begin{equation} \label{eq:rf}
\sum_{m\in \mathcal{M}'} r(m)^2 \le 3 \sum_{n\in \mathcal{M}} f(n)^2.
\end{equation}
To deal with the off-diagonal terms, we find, using again the definition of  $\mathcal{M}'$, that  
\begin{align} \label{eq:off} \sum_{m,n\in \mathcal{M}', m\neq n} r(m)r(n) 
\Phi\Big(T \log \frac{m}{n} \Big) & \le   \sum_{j,\ell\in \mathcal{J}, j\neq \ell} r(m_j)r(n_{\ell}) 
\Phi\left(T (|j-\ell|-1)\log(1+T^{-1})\right) \\ \nonumber 
& \ll \sum_{j,\ell\in \mathcal{J}, j\neq \ell} r(m_j)r(n_{\ell}) 
\Phi\left(|j-\ell |-1\right) \\ \nonumber 
& \ll \sum_{j,\ell\in \mathcal{J}, j\neq \ell} r(m_j)^2
\Phi\left(|j-\ell |-1\right) \ll \sum_{m\in \mathcal{M}'} r(m)^2.
\end{align}
Here we used the Cauchy--Schwarz inequality, the definition of $r(m)$, and finally the rapid decay of $\Phi(t)$. Plugging \eqref{eq:rf} and \eqref{eq:off} into \eqref{m1}, we arrive at \eqref{m1f}.
\end{proof}

The proofs of the next two lemmas follow closely an argument that may be found in \cite[p. 1699]{BS1}. Here an essential role is played by the following way of relating certain sums of coefficients over the two sets $\mathcal{M}$ and $\mathcal{M}'$. For a given $k$ in $\mathcal{M}$,  consider all pairs $m',n'$ in $\mathcal{M}'$ such that $ |km'/n' -1|\le 3/T$. We use the notation 
\[ J(m'):=\left[(1+T^{-1})^j,(1+T^{-1})^{j+1}\right), \]
where $j$ is the unique integer such that 
$(1+T^{-1})^j\le m'<(1+T^{-1})^{j+1}$. Using the Cauchy--Schwarz inequality and the definition of $r(m')$, we find that
\[ \sum_{m,n\in \mathcal{M}, mk=n, m\in J(m'),  n\in J(n')}f(m)f(n)\le r(m')r(n') \]
and hence, by the definition of $\mathcal{M}'$, that
\begin{equation}\label{eq:base} \sum_{m,n\in \mathcal{M}, mk=n}f(m)f(n)\le \sum_{m',n'\in \mathcal{M}', |km'/n' -1|\le 3/T} r(m')r(n'). \end{equation}
\begin{lemma}\label{lem:m2}
Suppose that
\begin{equation} \label{eq:gen}  F(t):=\sum_{n=1}^\infty  a_n  n^{-1/2-it} \end{equation}
is absolutely convergent and that $a_n\ge 0$ for every $n$. Let $\varepsilon$ be a positive number and $\gamma$ be the parameter defining the set $P$. Then
\begin{equation}\label{m2f} \int_{-\infty}^{\infty} F(t) |R(t)|^2 \Phi\Big(\frac{t}{T} \Big) dt  \ge T\left( \min_{n\le T^{\varepsilon}} a_n\right) \exp\left(\big(\gamma+o(1)\big)\sqrt{\kappa \frac{\log T \log_3 T}{\log_2 T}}\right) \sum_{n\in \mathcal{M}}f(n)^2\end{equation}
when $T\to\infty$, where the function $o(1)$ depends on the parameters $\gamma$, $\kappa$, and $\varepsilon$, but not on $F$.
\end{lemma}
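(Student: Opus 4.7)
The plan is to expand the integral via Fourier inversion, reduce by positivity to a manageable subsum, apply the base inequality \eqref{eq:base} to pass from $\mathcal{M}'$ to $\mathcal{M}$, and evaluate the resulting weighted average by an Euler product computation.

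First, using $\widehat{\Phi}(\xi) = \sqrt{2\pi}\,\Phi(\xi)$ together with the absolutely convergent expansions of $F(t)$ and of $|R(t)|^2 = \sum_{m,m'\in\mathcal{M}'} r(m)r(m') (m/m')^{-it}$, I would write
\[ \int_{-\infty}^{\infty} F(t)|R(t)|^2\Phi(t/T)\,dt = \sqrt{2\pi}\,T \sum_{n,m,m'} a_n n^{-1/2}\, r(m) r(m')\, \Phi\bigl(T\log(nm/m')\bigr). \]
Since every summand is nonnegative, I can restrict to $n = k$ with $k \le T^{\ve}$ and then to $(m,m') \in \mathcal{M}'\times\mathcal{M}'$ satisfying $|km/m' - 1| \le 3/T$, on which $\Phi(T\log(km/m')) \ge \Phi(4) > 0$. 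This produces
\[ \int \ge c\,T\,\min_{n\le T^{\ve}} a_n \sum_{\substack{k\in\supp(f) \\ k \le T^{\ve}}} k^{-1/2} \sum_{\substack{m,m' \in \mathcal{M}' \\ |km/m' - 1| \le 3/T}} r(m)r(m'). \]

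I would then invoke \eqref{eq:base} to bound the inner double sum below by $\sum_{\mu k = \nu,\, \mu,\nu\in\mathcal{M}} f(\mu)f(\nu)$. Since $\mathcal{M}$ is divisor closed and consists of squarefree numbers, whenever $\nu\in\mathcal{M}$, $k\in\supp(f)$, and $k\mid\nu$ we have $\nu/k \in \mathcal{M}$ and $f(\nu) = f(k)f(\nu/k)$, so this collapses to $f(k)^{-1} \sum_{\nu\in\mathcal{M},\,k\mid\nu} f(\nu)^2$. Interchanging the $k$- and $\nu$-sums then yields
\[ \int \ge c\,T\,\min_{n\le T^{\ve}} a_n \sum_{\nu\in\mathcal{M}} f(\nu)^2 \sum_{\substack{k\mid\nu \\ k \le T^{\ve}}} \frac{1}{f(k)\sqrt{k}}. \]

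It remains to show that the $f(\nu)^2$-weighted average of the inner divisor sum over $\nu\in\mathcal{M}$ is at least $\exp\bigl((\gamma+o(1))\sqrt{\kappa\log T\log_3 T/\log_2 T}\bigr)$. Momentarily dropping the cutoff $k \le T^{\ve}$ and passing to $\supp(f)$ (which only costs a factor $1+o(1)$ by an argument parallel to \eqref{ll3} applied to the weighted measure), a swap of sums gives the honest Euler product
\[ \frac{\sum_{\nu\in\supp(f)} f(\nu)^2 \prod_{p\mid\nu}(1 + 1/(f(p)\sqrt{p}))}{\sum_{\nu\in\supp(f)} f(\nu)^2} = \prod_{p\in P}\Bigl(1 + \frac{f(p)/\sqrt{p}}{1+f(p)^2}\Bigr), \]
whose logarithm is $(1+o(1))\sum_{p\in P} f(p)/\sqrt{p}$, since $f(p)\to 0$ uniformly for $p\in P$. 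A Mertens-type estimate of the kind used in the proof of Lemma~\ref{lem:new3} gives $\sum_{p\in P} f(p)/\sqrt{p} = (\gamma+o(1))\sqrt{\log N\log_3 N/\log_2 N}$, and $\log N = \kappa\log T + O(1)$ converts this into the claimed bound. The main obstacle is the cutoff $k\le T^{\ve}$: I would handle it by a Markov-type moment estimate, noting that the generating function $\sum_k k^s \prod_{p\mid k}\alpha_p = \prod_{p\in P}(1+p^s\alpha_p)$, with $\alpha_p = f(p)/(\sqrt{p}(1+f(p)^2))$, satisfies $\log\prod_p(1+p^s\alpha_p)/\prod_p(1+\alpha_p) = O\bigl(s\sqrt{\log N\log_2 N\log_3 N}\bigr)$, which for any fixed small $s>0$ is of smaller order than $\ve s\log T$; this forces the contribution from $k>T^{\ve}$ to be negligible against the full Euler product.
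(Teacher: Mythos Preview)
Your proposal is correct and follows essentially the same approach as the paper's proof, which itself defers the main computation to \cite{BS1}: expand termwise, restrict by positivity to $k\le T^{\varepsilon}$ and to near-diagonal pairs, apply \eqref{eq:base} to pass from $\mathcal{M}'$ to $\mathcal{M}$, and then evaluate the resulting weighted sum via the Euler product. You in fact supply the Euler-product details that the paper only cites; your computation of $\sum_{p\in P} f(p)/\sqrt{p}=(\gamma+o(1))\sqrt{\log N\log_3 N/\log_2 N}$ and the Rankin-type handling of the cutoff $k\le T^{\varepsilon}$ are both correct (though you will want $s=s(T)\to 0$, e.g.\ $s\asymp 1/\log_2 T$, rather than a fixed $s$, since $p^s$ is not uniformly bounded for $p\in P$ otherwise). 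The one step you gloss over, replacing $\mathcal{M}$ by $\supp(f)$ in the weighted average, does require the weighted analogue of \eqref{ll5}, i.e.\ showing $\sum_{n\in M_k} f(n)^2\prod_{p\mid n}(1+1/(f(p)\sqrt p))$ is negligible; this goes through by the same Rankin argument because $\sum_{p\in P_k} f(p)/\sqrt p$ is of smaller order than $\sum_{p\in P_k} f(p)^2$, so the exponent $b-1-a\log b$ is unchanged.
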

\begin{proof} We use the explicit expression for $R(t)$ and integrate termwise to get
\begin{align*}  \int_{-\infty}^{\infty} F(t) |R(t)|^2 \Phi\Big(\frac{t}{T} \Big) dt & =\sqrt{2\pi} T\sum_{m,n\in \mathcal{M}'} \sum_{k=1}^{\infty} \frac{ a_k r(m)r(n)}{\sqrt{k}} \Phi\Big(T \log \frac{km}{n} \Big) \\
& \ge \sqrt{2\pi} T \left(\min_{j\le T^{\varepsilon} } a_j \right) \sum_{m,n\in \mathcal{M}'} \sum_{k\le T^{\varepsilon}} \frac{r(m)r(n)}{\sqrt{k}} \Phi\Big(T \log \frac{km}{n}\Big). \end{align*}
In the last step, we used that all the terms in the series are positive, so that we could sum over a suitable finite subcollection of them. As in \cite[p. 1699]{BS1}, we change the order of summation and sum only over those $m$ and $n$ such that \eqref{eq:base} applies; the remaining part of the proof is identical to the estimation of the quantity $I(R,T)$ in \cite[p. 1699]{BS1}, leading to the displayed formula (25) in \cite{BS1}. We therefore omit the details. \end{proof}
\begin{lemma}\label{lem:m2log}
There exists a positive constant $c$ such that if   
\[ G(t):=\sum_{n=2}^\infty \frac{\Lambda(n) a_n}{\log n}  n^{-1/2-it} \]
is absolutely convergent and $a_n\ge 0$ for every $n$, then  
\[ \int_{-\infty}^{\infty} G(t) |R(t)|^2 \Phi\Big(\frac{t}{T} \Big) dt \ge c T 
\sqrt{\frac{\log T\log_3 T}{\log_2 T}} \left(\min_{p\in P} a_p\right) \sum_{n\in \mathcal{M}} f(n)^2. \]
\end{lemma}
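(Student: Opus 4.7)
The plan is to follow the proof of Lemma~\ref{lem:m2} closely, with the crucial difference that we restrict the indices $k$ appearing in the coefficients of $G$ to primes $p$ in $P$, so that we can invoke Lemma~\ref{lem:new3} instead of merely counting elements of $\mathcal{M}$. After expanding $|R(t)|^2$ and integrating termwise, we obtain
\[ \int_{-\infty}^{\infty} G(t) |R(t)|^2 \Phi\Big(\frac{t}{T}\Big) dt = \sqrt{2\pi}\, T \sum_{m,n \in \mathcal{M}'} r(m) r(n) \sum_{k=2}^\infty \frac{\Lambda(k) a_k}{\sqrt{k}\, \log k}\, \Phi\Big(T \log \frac{km}{n}\Big). \]
Since all terms are nonnegative, we retain only those $k = p$ with $p \in P$, for which $\Lambda(p)/\log p = 1$ and $a_p \geq \min_{p \in P} a_p$. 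The problem is thereby reduced to lower bounding $\sum_{p \in P} p^{-1/2} \sum_{m,n \in \mathcal{M}'} r(m) r(n) \Phi(T \log(pm/n))$.

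For each fixed $p \in P$, I would then apply the same resonance step as in Lemma~\ref{lem:m2}: using \eqref{eq:base} together with the observation that $\Phi(T \log(pm'/n'))$ is bounded below by an absolute positive constant whenever $|pm'/n' - 1| \leq 3/T$, we get
\[ \sum_{m,n \in \mathcal{M}'} r(m) r(n)\, \Phi\Big(T \log \frac{pm}{n}\Big) \geq c \sum_{\substack{m,n \in \mathcal{M} \\ mp = n}} f(m) f(n). \]
Since $\mathcal{M}$ consists of square-free integers, $m \in \mathcal{M}$ and $mp \in \mathcal{M}$ force $p \nmid m$, and then multiplicativity of $f$ gives $\sum_{m, n \in \mathcal{M},\, mp = n} f(m) f(n) = f(p) \sum_{m \in \mathcal{M},\, p \nmid m,\, mp \in \mathcal{M}} f(m)^2$.

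The key step is to recognize the double sum over $p \in P$ and $m \in \mathcal{M}$ that emerges as being exactly the left-hand side of \eqref{ll2}. Indeed, since $\mathcal{M}$ is divisor-closed, every $n \in \mathcal{M}$ with $p \mid n$ can be written uniquely as $n = mp$ with $m \in \mathcal{M}$ and $p \nmid m$, and then $f(n)^2 = f(p)^2 f(m)^2$; hence
\[ \sum_{n \in \mathcal{M}} f(n)^2 \sum_{p \mid n} \frac{1}{f(p) \sqrt{p}} = \sum_{p \in P} \frac{f(p)}{\sqrt{p}} \sum_{\substack{m \in \mathcal{M},\, p \nmid m \\ mp \in \mathcal{M}}} f(m)^2. \]
Invoking Lemma~\ref{lem:new3} with $N = [T^\kappa]$ now yields a lower bound of order $\sqrt{\log T \log_3 T/\log_2 T}$ times $\sum_i f(i)^2 \geq \sum_{n \in \mathcal{M}} f(n)^2$, with the constant $\sqrt{\kappa}$ absorbed into the final absolute constant $c$.

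The main obstacle, if any, is purely bookkeeping: matching the sum produced by the integral expansion with the left-hand side of \eqref{ll2} via the correct interplay of square-freeness, multiplicativity of $f$, and divisor-closedness of $\mathcal{M}$. Once this identification is made, Lemma~\ref{lem:new3} supplies precisely the extra factor $\sqrt{\log T \log_3 T/\log_2 T}$ that distinguishes the present lemma from Lemma~\ref{lem:m2}. There is no need to impose any restriction of the form $k \leq T^\varepsilon$, as the primes $p \in P$ are automatically much smaller than any fixed positive power of $T$, and the prime-power terms ($k = p^j$, $j \geq 2$) are simply discarded using nonnegativity.
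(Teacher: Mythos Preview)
Your proof is correct and follows essentially the same approach as the paper: expand termwise, restrict to primes $p\in P$ by positivity, use \eqref{eq:base} together with the lower bound on $\Phi$ for near-integer ratios, and then identify the resulting double sum with the left-hand side of \eqref{ll2} so that Lemma~\ref{lem:new3} finishes the argument. Your version is in fact more explicit than the paper's, which condenses the passage from \eqref{eq:base} to the sum $\sum_{n\in\mathcal{M}}f(n)^2\sum_{p\mid n}1/(f(p)\sqrt{p})$ into a single sentence, whereas you spell out the square-free/multiplicativity/divisor-closed bookkeeping that justifies this identification.
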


\begin{proof} We use again the explicit expression for $R(t)$ and integrate termwise. This gives 
\begin{align*}  \int_{-\infty}^{\infty} G(t) |R(t)|^2 \Phi\Big(\frac{t}{T} \Big) dt & =\sqrt{2\pi} T\sum_{m,n\in \mathcal{M}'} \sum_{k=2}^{\infty} \frac{\Lambda(k) a_k r(m)r(n)}{\sqrt{k}\log k} \Phi\Big(T \log \frac{km}{n} \Big) \\
& \ge \sqrt{2\pi} T \left(\min_{p\in P} a_p\right) \sum_{m,n\in \mathcal{M}'} \sum_{p\in P} \frac{r(m)r(n)}{\sqrt{p}} \Phi\Big(T \log \frac{pm}{n}\Big), \end{align*}
%By the Cauchy--Schwarz inequality, we have
%\[ \sum_{m,n\in \mathcal{M}'} \sum_{k\in \mathcal{M}, k> T} \frac{r(m)r(n)}{\sqrt{k}}
%\le T^{-1/2} |\mathcal{M}|\cdot |\mathcal{M}'| \ \sum_{m\in \mathcal{M}'} r(m)^2, \]
%and hence we obtain, using that $|\mathcal{M}'|\le |\mathcal{M}|\le N\le T^{\kappa}$ and the assumption that $\kappa<1/4$,  that
%\begin{equation} \label{ii} I(R,T)=\frac{\sqrt{2\pi} T}{\log T} \sum_{m,n\in \mathcal{M}'} \sum_{k\in \mathcal{M}} \frac{r(m)r(n)}{\sqrt{k}} \Phi
%\Big(\frac{T}{\log T} \log \frac{km}{n} \Big) + o(T) \sum_{n\in \mathcal{M}}f(n)^2. \end{equation}
where we as in the preceding case used that all the terms in the series are positive. We sum again over those indices $m$ and $n$ in $\mathcal{M}$ such that \eqref{eq:base} applies. This means that if we set $k=p$, divide \eqref{eq:base} by $\sqrt{p}$, and sum over all the primes $p$ in $P$, then we get
\[ \label{last} \int_{-\infty}^{\infty} G(t) |R(t)|^2 \Phi\Big(\frac{t}{T} \Big) dt \gg T %\frac{1}{\sum_{i\in\N}f(i)^2}
\left(\min_{p\in P} a_p\right)\sum_{n\in\mathcal{M}}f(n)^2\sum_{p|n}\frac{1}{f(p)\sqrt{p}}. \]
Hence, using Lemma~\ref{lem:new3}, we see that
\begin{equation} \label{m2f}  \int_{-\infty}^{\infty} G(t) |R(t)|^2 \Phi\Big(\frac{t}{T} \Big) dt\gg T \sqrt{\frac{\log T \log_3 T}{\log_2 T}}\left( \min_{p\in P} a_p\right)\sum_{n\in \mathcal{M}} f(n)^2.\end{equation}
\end{proof}

\section{Proof of Theorem~\ref{extreme}}\label{sec:proof1}

The proof of Theorem~\ref{extreme} is a little easier than the proof of the main theorem of \cite{BS1}, because our convolution formula allows us to estimate more crudely. We choose
\[ K(t):=\frac{\sin^2((\varepsilon \log T)t)}{(\varepsilon \log T)t^2} \]
where $\varepsilon>0$ can be chosen as small as we please. We notice that 
\begin{equation}\label{eq:fourier1} \widehat{K}(\xi)=\frac{\pi}{2} \max\left(\left(1-\frac{|\xi |}{2 \varepsilon \log T}\right), 0\right). \end{equation} 
We find that
\begin{align*} \int_{-T^{\beta}}^{T^\beta} \int_{-\infty}^{\infty} |\zeta(1/2+i (t+u))| K(u) dudt
& \ll T^{\beta} + \int_{-T^{\beta}}^{T^\beta} \int_{|u|\le T^\beta} |\zeta(1/2+i (t+u))| K(u) dudt \\
& \ll T^{\beta} + \int_{-2T^{\beta}}^{2T^\beta} |\zeta(1/2+i t)| dt  \ll T^\beta \sqrt{\log T}, \end{align*}
where we in the last step used the Cauchy--Schwarz inequality and Hardy and Littlewood's classical bound for the second moment of $\zeta(1/2+it)$. Hence
\begin{align*} \int_{-T^{\beta}}^{T^\beta} \int_{-\infty}^{\infty} |\zeta(1/2+i (t+u))| K(u) du |R(t)|^2 \Phi\left(\frac{t}{T}\right) dt
& \ll T^\beta \sqrt{\log T} R(0)^2 \\ 
& \ll T^{\beta+\kappa} \sqrt{\log T}\sum_{n\in \mathcal{M}}f(n)^2 \end{align*} 
by a trivial estimation of $R(0)^2$. Plainly, by the rapid decay of $\Phi(t)$, we also have
\[  \int_{|t|>T\log T} \int_{-\infty}^{\infty} |\zeta(1/2+i (t+u))| K(u) du |R(t)|^2 \Phi\left(\frac{t}{T}\right) dt
=o(1) \sum_{n\in \mathcal{M}}f(n)^2,\]
whence
\begin{align} \label{eq:conv1}
& \int_{T^{\beta}\le |t| \le T\log T} \int_{-\infty}^{\infty}  \zeta(1/2+i (t+u)) K(u) du |R(t)|^2 \Phi\left(\frac{t}{T}\right) dt \\
&\nonumber  \quad = \int_{-\infty}^{\infty} \int_{-\infty}^{\infty} \zeta(1/2+i (t+u)) K(u) du |R(t)|^2 \Phi\left(\frac{t}{T}\right) dt + O\Big(T^{\beta+\kappa}\sqrt{\log T}\Big) \sum_{n\in \mathcal{M}}f(n)^2.
\end{align}
We now require that $\kappa<1-\beta$ and see that by applying Lemma~\ref{lem:m1} to the left-hand side of \eqref{eq:conv1}, we obtain
\begin{align} \label{eq:conv2}
& \max_{T^{\beta}/2\le t \le 2T\log T} |\zeta(1/2+it)| T \sum_{n\in \mathcal{M}}f(n)^2 \\ \nonumber
& \quad \gg \int_{-\infty}^{\infty} \int_{-\infty}^{\infty} \zeta(1/2+i (t+u)) K(u) du |R(t)|^2 \Phi\left(\frac{t}{T}\right) dt + O(T) \sum_{n\in \mathcal{M}}f(n)^2.
\end{align} 
We now set
\[ F(t):=\sum_{n=1}^\infty \widehat{K}(\log n) n^{-1/2-it} \] and see that by Lemma~\ref{lem:zeta}, the double integral on the right-hand side of \eqref{eq:conv2} takes the form
\begin{align*} \int_{-\infty}^{\infty}  \int_{-\infty}^{\infty} & \zeta(1/2+i (t+u))  K(u) du |R(t)|^2 \Phi\left(\frac{t}{T}\right) dt \\ & =\int_{-\infty}^{\infty} F(t) |R(t)|^2 \Phi\Big(\frac{t}{T} \Big) dt +2\pi \int_{-\infty}^{\infty} K(t-i/2) |R(t)|^2 \Phi\Big(\frac{t}{T} \Big) dt.
  \end{align*}
We invoke  Lemma~\ref{lem:m2} to estimate the first term on the right-hand side, and we estimate the second term by using the explicit expression for $K(t-i/2)$ and using again the trivial estimate $|R(t)|\le R(0)$. This gives us
\begin{align} \label{eq:fin} \int_{-\infty}^{\infty}  \int_{-\infty}^{\infty} & \zeta(1/2+i (t+u))  K(u) du |R(t)|^2 \Phi\left(\frac{t}{T}\right) dt \\ \nonumber & \gg \left(T \left( \min_{n\le T^{\varepsilon}} \widehat{K}(\log n)\right) \exp\left(\big(\gamma+o(1)\big)\sqrt{\kappa \frac{\log T \log_3 T}{\log_2 T}}\right) +O(T^{\kappa+\varepsilon})\right) \sum_{n\in \mathcal{M}}f(n)^2.
  \end{align}
In view of \eqref{eq:fourier1}, $\min_{n\le T^{\varepsilon}} \widehat{K}(\log n)$ is bounded below by $\pi/4$. Hence, choosing $\varepsilon$ small enough and plugging \eqref{eq:fin} into \eqref{eq:conv2}, we find that
the asserted bound \eqref{eq:ass} holds for some $t$ satisfying $T^{\beta}/2 \le t \le 2 T \log T$. We obtain the desired restriction  $T^{\beta} \le t \le T$ after a trivial adjustment, changing $T$ to $T/(2\log T)$ and making $\beta$ slightly smaller. 

\section{Proof of Theorem~\ref{thm:Omega}}\label{sec:proof2}
The proof of Theorem~\ref{thm:Omega} is very similar to the preceding proof, but there is an interesting distinction: In Lemma~\ref{lem:selberg}, we choose $K(t)$  to be an odd function when dealing with $S(t)$ and an even function when dealing with $S_1(t)$; this difference is the reason why we only obtain a conditional  $\Omega$ result for $S(t)$.

\begin{proof}[Proof of \eqref{eq:Sbound}]
We now choose
\[ K(t):=-(\log_2 T)^2 t \Phi((\log_2 T)t), \]
which has Fourier transform
\begin{equation}\label{eq:fourier} \widehat{K}(\xi)= i \sqrt{2\pi} (\log_2 T)^{-1} \xi \Phi(\xi/\log_2 T). \end{equation}
We compute in the same fashion as above:
\begin{align*} \int_{-T^{\beta}}^{T^\beta} \int_{-\infty}^{\infty} |S(t+u)) K(u)| dudt
& \ll T^{\beta} + \int_{-T^{\beta}}^{T^\beta} \int_{|u|\le T^\beta} |S(t+u) K(u)| dudt \\
& \ll T^{\beta} + \int_{-2T^{\beta}}^{2T^\beta} |S(t)| dt  \ll T^\beta \sqrt{\log_2T}, \end{align*}
where we in the last step used the Cauchy--Schwarz inequality and a classical bound of Selberg \cite{S}  for the second moment of $S(t)$. We follow the same steps as in the preceding case and hence, requiring that $\kappa<1-\beta$, we find that 
\begin{align} \label{eq:conv3}
 \int_{T^{\beta}\le |t| \le T\log T} \int_{-\infty}^{\infty} &  S(t+u) K(u) du |R(t)|^2 \Phi\left(\frac{t}{T}\right) dt \\
&\nonumber   = \int_{-\infty}^{\infty} \int_{-\infty}^{\infty} S(t+u)) K(u) du |R(t)|^2 \Phi\left(\frac{t}{T}\right) dt + O(T) \sum_{n\in \mathcal{M}}f(n)^2.
\end{align}  Applying Lemma~\ref{lem:m1} to the left-hand side of \eqref{eq:conv3}, we obtain
\begin{align} \label{eq:conv4}
& \max_{T^{\beta}/2\le t \le 2T\log T} |S(t)| T \sum_{n\in \mathcal{M}}f(n)^2 \\ \nonumber
& \quad \gg \int_{-\infty}^{\infty} \int_{-\infty}^{\infty} S(t+u) K(u) du |R(t)|^2 \Phi\left(\frac{t}{T}\right) dt + O(T) \sum_{n\in \mathcal{M}}f(n)^2.
\end{align} 
We now set
\[ G(t):=\sum_{n=2}^\infty \frac{\Lambda(n) \widehat{K}(\log n)}{\pi \log n} n^{-1/2-it} \] and see that by \eqref{eq:Tsang3}, the double integral on the right-hand side of \eqref{eq:conv4} takes the form
\begin{align*} \int_{-\infty}^{\infty}  \int_{-\infty}^{\infty} & S(t+u)  K(u) du |R(t)|^2 \Phi\left(\frac{t}{T}\right) dt \\ & =\Imag \int_{-\infty}^{\infty} G(t) |R(t)|^2 \Phi\Big(\frac{t}{T} \Big) dt +O\left(\int_{-\infty}^{\infty} V(t) |R(t)|^2 \Phi\Big(\frac{t}{T} \Big) dt\right).
  \end{align*}
We invoke  Lemma~\ref{lem:m2log} to estimate the first term on the right-hand side, and we estimate the second term by using the explicit expression for $V(t)$ and using again the trivial estimate $|R(t)|\le R(0)$. This gives us
\begin{align} \label{eq:fin2} \int_{-\infty}^{\infty}  \int_{-\infty}^{\infty} & S (t+u)  K(u) du |R(t)|^2 \Phi\left(\frac{t}{T}\right) dt \\ \nonumber & \gg \left(T \left( \min_{p\in P} \Imag \widehat{K}(\log p)\right) \sqrt{ \frac{\log T \log_3 T}{\log_2 T}} +O(T^{\kappa+\varepsilon})\right) \sum_{n\in \mathcal{M}}f(n)^2
  \end{align}
for every $\varepsilon>0$. By \eqref{eq:fourier} and the definition of $\mathcal{M}$, 
\[  \min_{p\in P} \Imag \widehat{K}(\log p)=\sqrt{2\pi} \min_{e\log N \log_2 N\le p \le e^{(\log_2 N)^{\gamma}}\log N \log_2 N}\frac{\log p\ \Phi(\log p/\log_2 T)}{\log_2 T} \gg 1\] 
since $N=[T^{\kappa}]$.
Choosing $\varepsilon$ small enough and plugging \eqref{eq:fin2} into \eqref{eq:conv4}, we therefore obtain the asserted bound \eqref{eq:Sbound}, after the same trivial adjustment of $T$ and $\beta$ as in the preceding case. 
\end{proof}

\begin{proof}[Proof of \eqref{eq:S1bound}]
We now choose
\[ K(t):=\log_2 T  \Phi((\log_2 T)t), \]
which has Fourier transform
\[ \widehat{K}(\xi)=  \sqrt{2\pi} \Phi(\xi/\log_2 T). \]
Computing exactly as in the preceding case, we obtain
\begin{align} \label{eq:conv5}
& \max_{T^{\beta}/2\le t \le 2T\log T} S_1(t) T \sum_{n\in \mathcal{M}}f(n)^2 \\ \nonumber
& \quad \gg \int_{-\infty}^{\infty} \int_{-\infty}^{\infty} S_1(t+u) K(u) du |R(t)|^2 \Phi\left(\frac{t}{T}\right) dt + O(T) \sum_{n\in \mathcal{M}}f(n)^2.
\end{align} 
We now set
\[ G(t):=\sum_{n=2}^\infty \frac{\Lambda(n) \widehat{K}(\log n)}{\pi \log^2 n} n^{-1/2-it} \] and see that by \eqref{eq:Tsang4}, the double integral on the right-hand side of \eqref{eq:conv5} takes the form
\begin{align*} \int_{-\infty}^{\infty}  \int_{-\infty}^{\infty} & S_1(t+u)  K(u) du |R(t)|^2 \Phi\left(\frac{t}{T}\right) dt \\ & =\Real \int_{-\infty}^{\infty} G(t) |R(t)|^2 \Phi\Big(\frac{t}{T} \Big) dt +O\left(\int_{-\infty}^{\infty} \left(V(t)+1\right) |R(t)|^2 \Phi\Big(\frac{t}{T} \Big) dt\right),
  \end{align*}
where $V(t)$ is the function introduced in Lemma~\ref{lem:selberg}. We invoke  Lemma~\ref{lem:m2log} again to estimate the first term on the right-hand side, and we estimate the second term by using the explicit expression for $V(t)$  and using again the trivial estimate $|R(t)|\le R(0)$. This gives us
\begin{align} \label{eq:fin3} \int_{-\infty}^{\infty}  \int_{-\infty}^{\infty} & S_1 (t+u)  K(u) du |R(t)|^2 \Phi\left(\frac{t}{T}\right) dt \\ \nonumber & \gg \left(T \left( \min_{p\in P} \frac{\widehat{K}(\log p)}{\log p}\right) \sqrt{ \frac{\log T \log_3 T}{\log_2 T}} +O(T^{\kappa+\varepsilon})+O(T)\right) \sum_{n\in \mathcal{M}}f(n)^2
  \end{align}
 for every $\varepsilon>0$.
Plainly,
\[  \min_{p\in P} \frac{\widehat{K}(\log p)}{\log p}\gg \frac{1}{\log_2 T}, \]
and hence choosing $\varepsilon$ small enough and plugging \eqref{eq:fin3} into \eqref{eq:conv5}, we obtain the asserted bound \eqref{eq:S1bound}, again adjusting $T$ and $\beta$ appropriately.
\end{proof} 

\section*{Acknowledgements}

We would like to thank Dennis Hejhal for an inspiring correspondence which led us to carry out the research presented in this paper. We are also indebted to Daniel Goldston for a pertinent bibliographical remark on the first version of this paper. Finally, we would like to express our gratitude to the referee for a very careful review.

\end{document}